\theoremstyle{plain}
\numberwithin{equation}{section}
\newtheorem{thm}{Theorem}[section]
\newtheorem{theorem}[thm]{Theorem}
\newtheorem{definition}[thm]{Definition}
\newtheorem{corollary}[thm]{Corollary}
\begin{document}
\fancyhead{}
\renewcommand{\headrulewidth}{0pt}
\fancyfoot{}
\fancyfoot[LE,RO]{\medskip \thepage}
\fancyfoot[LO]{\medskip MONTH YEAR}
\fancyfoot[RE]{\medskip VOLUME , NUMBER }

\setcounter{page}{1}

\title[Prime Fibonacci Sequences]{A Note on Prime Fibonacci Sequences}
\author{Jeremy F.~Alm and Taylor Herald}
\address{Department of Mathematics\\
                Illinois College\\
                Jacksonville, IL\\
                62650}
\email{alm.academic@gmail.com}
\email{herald.taylor@mail.ic.edu}
% \thanks{Research supported in part by the Natural Sciences and Engineering Research Council of Canada and by Emperor Frederick II of Sicily.}
% \author{Taylor Herald}
% \address{Dipartimento di Matematica\\
%               Universit\`{a} di Pisa\\
%               144 Via Fibonacci\\
%               56127 Pisa, Italy}
% \email{leo@dm.unipi.it}

\begin{abstract}
In this paper, we define a variant of  Fibonacci-like sequences that we call prime Fibonacci sequences, where one takes the sum of the previous two terms and returns the smallest odd prime divisor of that sum as the next term.  We prove that these sequences always terminate in a power of two but can be extended infinitely to the left. % Therefore, we consider reversed prime Fibonacci sequences. This is a crappy abstract.
\end{abstract}

\maketitle

\section{Introduction}

In \cite{GKS}, Guy, Khovanova, and Salazar study a variant of Fibonacci-like sequences that they call subprime Fibonacci sequences---a variant suggested by Conway.  To compute a term of a subprime Fibonacci sequence, one takes the sum of the previous two terms and, if the sum is composite, divides by its smallest prime divisor.  They study periodic subprime Fibonacci sequences and derive many interesting results; however, they are unable to prove that any such sequence ``diverges'' (i.e., does not eventually end in a cycle).  Indeed, it is difficult to imagine how one might prove such a thing.  The question, ``Do all subprime Fibonacci sequences eventually end in a cycle?"  may belong to the class of extremely difficult (possibly even formally unsolvable) problems discussed in \cite{Conway}; one such example is the generalized Collatz problem, which was shown in \cite{KS} to be undecidable.

In this paper, we consider a different variant of the Fibonacci sequence that submits itself to fairly complete analysis: instead of adding two terms and \emph{dividing} by the smallest prime divisor we \emph{return} the smallest \emph{odd} prime divisor as the next term.

\begin{definition}
Let $p_1,p_2$ be odd primes.  Then the prime Fibonacci sequence generated by $p_1,p_2$ is $(a_i)$, where $a_{i+2}$ is the smallest odd prime divisor of $a_{i}+a_{i+1}$ if $a_{i}+a_{i+1}$ is not a power of 2; otherwise, the sequence terminates.
\end{definition}

For example, consider starting with $5,7$:
\[
5,7,3,5. 
\]
The sequence terminates because the last two terms add up to a power of two.  As we will see, these prime Fibonacci sequences always terminate, but can be made arbitrarily long.  In fact, they can be infinite to the left, so we will take one and ``turn it around" to get an infinite sequence.

\section{Proofs of Main Results}

\begin{theorem} \label{terminates}
Given distinct odd primes $p_1,p_2$, the prime Fibonacci sequence $(a_i)$ with $a_1=p_1, a_2=p_2$ terminates in a power of 2.
\end{theorem}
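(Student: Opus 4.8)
The plan is to exhibit a monovariant. Set $M_i := \max(a_i, a_{i+1})$ and show that $(M_i)$ is non-increasing but cannot be eventually constant, the latter being exactly where the distinctness of $p_1$ and $p_2$ enters. First I would record the elementary fact that every term of the sequence is an odd prime: $a_1$ and $a_2$ are odd primes by hypothesis, and each subsequent $a_{i+2}$, when it is defined, is by construction the smallest odd prime divisor of $a_i+a_{i+1}$, hence itself an odd prime. Consequently every sum $a_i+a_{i+1}$ is even, so whenever the sequence does not terminate at step $i$ we may write $a_i+a_{i+1}=2^k m$ with $k\ge 1$ and $m\ge 3$ odd, and then $a_{i+2}$ is the least prime factor of $m$.

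The crucial estimate is now immediate: $a_{i+2}\le m=(a_i+a_{i+1})/2^k\le (a_i+a_{i+1})/2\le M_i$. Hence $M_{i+1}=\max(a_{i+1},a_{i+2})\le M_i$, so $(M_i)$ is a non-increasing sequence of integers $\ge 3$ and therefore eventually constant, say $M_i=M$ for all $i\ge I$. Furthermore this chain of inequalities can collapse to an equality $a_{i+2}=M_i$ only if $a_i+a_{i+1}=2M_i$, which, since $a_i,a_{i+1}\le M_i$, forces $a_i=a_{i+1}=M_i$. So, provided no two consecutive terms are equal, we actually have $a_{i+2}<M_i$, and then $M_{i+1}=M_i$ can hold only because $a_{i+1}=M_i$. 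Applying this for all $i\ge I$ would yield $a_{i+1}=M$ for every $i\ge I$, in particular $a_{I+1}=a_{I+2}=M$ --- two equal consecutive terms.

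It remains to rule those out. I would prove the lemma: if $p_1\ne p_2$, no two consecutive terms of the sequence are equal. Suppose $a_k=a_{k+1}=p$. If $k=1$ this is $p_1=p_2$, a contradiction. If $k\ge 2$, then $p=a_{k+1}$ divides $a_{k-1}+a_k=a_{k-1}+p$, so $p\mid a_{k-1}$; as $a_{k-1}$ is an odd prime, $a_{k-1}=p$. Repeating drives us down to $a_1=a_2=p$, again a contradiction. With the lemma in hand, the previous paragraph shows a non-terminating sequence is impossible, so the sequence terminates; and by the definition of a prime Fibonacci sequence this can happen only because some $a_i+a_{i+1}$ is a power of $2$, which is the assertion.

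The only genuinely non-mechanical step is the choice of $M_i$ as the quantity to watch together with the observation that its stabilization is incompatible with the strict inequality $a_{i+2}<M_i$ --- that is what isolates the role of the hypothesis $p_1\ne p_2$. Once this is seen, every remaining step (the odd-prime observation, the bound $a_{i+2}\le M_i$, the downward induction in the lemma) is routine, and the proof is short.
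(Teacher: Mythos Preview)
Your proof is correct and follows essentially the same approach as the paper: both arguments use the monovariant $M_i=\max(a_i,a_{i+1})$, obtain $a_{i+2}\le (a_i+a_{i+1})/2\le M_i$ with strict inequality when $a_i\ne a_{i+1}$, and rule out equal consecutive terms by the backward-propagation argument. Your write-up is more detailed than the paper's (which leaves the ``eventually constant implies constant from the start'' step to the reader), but the underlying idea is identical.
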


\begin{proof}
Let $(a_i)$ be any prime Fibonacci sequence. Since $a_i$ and $a_{i+1}$ are odd, their sum is even. So, $a_{i+2} \leq \frac{a_i + a_{i+2}}{2}$. In particular, $a_{i+2} < \max[a_i, a_{i+1}]$ if $a_i \neq a_{i+1}$. (It is easy to show both that any eventually constant prime Fibonacci sequence must have been constant from the beginning and that no nontrivial periodic sequences exist.) The conclusion follows.
\end{proof}

% \thispagestyle{fancy}

% \vfil\eject
% \fancyhead{}
% \fancyhead[CO]{\hfill PRIME FIBONACCI SEQUENCES}
% \fancyhead[CE]{THE FIBONACCI QUARTERLY \hfill}
% \renewcommand{\headrulewidth}{0pt}

\begin{theorem} \label{main}
Given distinct odd primes $p_1,p_2$, we can always find an odd prime $p_0$ so that $p_2$ is the smallest odd prime dividing $p_0+p_1$.
\end{theorem}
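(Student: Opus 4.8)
The plan is to invoke Dirichlet's theorem on primes in arithmetic progressions. What we want is an odd prime $p_0$ satisfying (i) $p_2 \mid p_0+p_1$ and (ii) no odd prime $q<p_2$ divides $p_0+p_1$; together these say precisely that $p_2$ is the smallest odd prime dividing $p_0+p_1$. (Observe that (i) also forces $p_0+p_1$ to have an odd prime factor at all, hence not to be a power of $2$, since $p_2$ is odd.)

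To set this up, let $q_1<q_2<\cdots<q_k$ be the odd primes below $p_2$ (with $k=0$ when $p_2=3$), and put $N=p_2\,q_1\cdots q_k$. First I would use the Chinese Remainder Theorem to choose a residue $r$ modulo $N$ with $r\equiv -p_1\pmod{p_2}$, with $r\not\equiv -p_1\pmod{q_i}$ for each $i$, and with $\gcd(r,N)=1$. Such an $r$ exists: modulo $p_2$ the prescribed value $-p_1$ is a unit, because $p_1\neq p_2$ forces $p_2\nmid p_1$; and modulo each $q_i$ we need only avoid the residues $0$ and $-p_1$ (at most two values), while $q_i\geq 3$, so an admissible unit residue still remains. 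The congruence $\gcd(r,N)=1$ is automatic from these choices, so Dirichlet applies.

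Then Dirichlet's theorem supplies infinitely many primes in the progression $\{\,r+mN : m\geq 0\,\}$; selecting one with $p_0>2$ (and, if one likes, $p_0\neq p_1$) makes $p_0$ an odd prime with $p_0\equiv r\pmod N$. By construction $p_2\mid p_0+p_1$, while $q_i\nmid p_0+p_1$ for every $i$, so the least odd prime factor of $p_0+p_1$ is exactly $p_2$, which is the claim.

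I expect no genuine obstacle once Dirichlet is on the table; the one point requiring care is the residue count modulo the small primes $q_i$ — one must be sure never to forbid all of $\Z/q_i\Z$, which is exactly why $q_i\geq 3$ is used and why the possible coincidence $q_i\mid p_1$ (which merges the two forbidden residues into one) is harmless. It is worth flagging that Dirichlet is doing real work here: an elementary substitute would have to show that some explicit value such as $2^{a}p_2-p_1$ is prime, which we have no means to guarantee.
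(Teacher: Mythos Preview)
Your proof is correct and follows the same route as the paper: build a residue class modulo $\prod_{3\le q\le p_2} q$ via the Chinese Remainder Theorem and then invoke Dirichlet's theorem on primes in arithmetic progressions. The only difference is that the paper fixes the specific residue $1-p_1$ modulo each small prime $q<p_2$, whereas you argue nonconstructively that some unit residue avoiding $-p_1$ exists; your version is in fact slightly more careful, since the paper's choice tacitly needs $q\nmid p_1-1$ to guarantee $\gcd(a,Q)=1$, a point it glosses over.
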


\begin{proof}
Let $p_1, p_2$ be distinct odd primes.  If $p_2\mid p_0+p_1$, then $p_2\cdot m=p_0+p_1$ for some $m\in\mathbb{Z}^+$, and $p_0=p_2\cdot m-p_1$.  Let 
\[
2,3,5,\dots,p',p_2
\]

be the list of primes up to $p_2$.  Let $Q=\prod_{2<p\leq p_2} p$, and consider the equivalences
\begin{align}
x &\equiv 1-p_1 \pmod{3} \label{eq:first}\\
x &\equiv 1-p_1 \pmod{5}\\
x &\equiv 1-p_1 \pmod{7}\\
&\vdots \nonumber \\
x &\equiv 1-p_1 \pmod{p'}\\
x &\equiv-p_1 \pmod{p_2} \label{eq:last}
\end{align}

By the Chinese Remainder Theorem, there is a unique solution to \eqref{eq:first}-\eqref{eq:last} modulo $Q$;  let $a$ be this solution.  Note that $(a,Q)=1$.  Then Dirichlet's Theorem on primes in arithmetic progressions (see \cite{Stein} for a proof) tells us that the sequence 
\[
a, a+Q, a+2Q, a+3Q, \dots
\]
contains infinitely  many primes.  Let $p_0$ be the smallest of these.  We verify that $p_0$ has the desired properties.

Write $p_0=a+nQ$ for some $n\in\mathbb{Z}$, so 
\begin{align*}
p_0+p_1 &=a+nQ+p_1\\
&\equiv -p_1+0+p_1 \pmod{p_2}\\
&=0.
\end{align*}
Hence $p_2\mid p_0+p_1$.  Now let $q<p_2$ be an odd prime.  Again, 
\begin{align*}
p_0+p_1 &=a+nQ+p_1\\
&\equiv 1-p_1+0+p_1 \pmod{q}\\
&\equiv 1 \pmod{q}.
\end{align*}
Hence $q\nmid p_0+p_1$.

\end{proof}

\begin{corollary} \label{cor:kterm}
For all positive integers $k$, there is a  prime Fibonacci sequence of length at least $k$.
\end{corollary}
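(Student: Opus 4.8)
The plan is to induct on $k$, proving the slightly strengthened assertion that for every integer $k\ge 2$ there is a prime Fibonacci sequence of length at least $k$ whose first two terms are distinct odd primes; the corollary for $k=1,2$ is trivial, witnessed by the pair $3,5$, which is a prime Fibonacci sequence of length exactly $2$ since $3+5=8$ is a power of two. The real content lies in the inductive step, and Theorem~\ref{main} is tailor-made for it: it lets us lengthen a sequence by one term at its \emph{left} end.

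For the inductive step, suppose $a_1,a_2,\dots,a_m$ is a prime Fibonacci sequence of length $m\ge 2$ with $a_1\ne a_2$. Apply Theorem~\ref{main} to the distinct odd primes $a_1,a_2$ to obtain an odd prime $a_0$ for which $a_2$ is the smallest odd prime dividing $a_0+a_1$. I would then verify that the prepended sequence $a_0,a_1,a_2,\dots,a_m$ is again a prime Fibonacci sequence: at each position $i\ge 1$ the defining condition is inherited unchanged from the old sequence; at position $i=0$ it is exactly the conclusion of Theorem~\ref{main}; and $a_0+a_1$ is not a power of two, being divisible by the odd prime $a_2$. This yields a prime Fibonacci sequence of length at least $m+1$, so iterating $k-2$ times starting from $3,5$ produces one of length at least $k$.

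The point that needs care --- and the one I expect to be the only genuine obstacle --- is keeping the hypothesis of Theorem~\ref{main} alive from one prepending to the next: to apply it again to $a_0,a_1,\dots$ we must know that $a_0\ne a_1$. The bare statement of Theorem~\ref{main} does not promise this, but its proof does more than it advertises: the prime $p_0$ is drawn from an arithmetic progression $a,a+Q,a+2Q,\dots$ that contains infinitely many primes by Dirichlet's theorem, and the two congruence checks at the end of that proof use only that $p_0\equiv a\pmod{Q}$, so \emph{every} prime in the progression is an admissible choice. We may therefore always select $a_0$ to exceed $a_1$, hence in particular $a_0\ne a_1$, and the induction invariant ``the first two terms are distinct'' is preserved. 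Beyond this, the argument is just a mechanical check that prepending a term respects the definition of a prime Fibonacci sequence.
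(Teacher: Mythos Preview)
Your proof is correct and follows the paper's intended derivation of the corollary directly from Theorem~\ref{main} (the paper also records an alternate proof via the Green--Tao theorem). Your care about maintaining $a_0\ne a_1$ for the next step is in fact unnecessary: if $a_0=a_1$ then $a_0+a_1=2a_1$ has smallest odd prime divisor $a_1\ne a_2$, contradicting the very conclusion of Theorem~\ref{main}, so distinctness of the new leading pair comes for free.
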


\begin{corollary} \label{cor:left}
Any prime Fibonacci sequence can be extended indefinitely to the left.
\end{corollary}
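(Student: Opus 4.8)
The plan is to build the left-extension one term at a time, each step consisting of a single application of Theorem~\ref{main}. Let $(a_i)$ be the given prime Fibonacci sequence, so $a_1$ and $a_2$ are odd primes. First I would dispose of the degenerate case $a_1=a_2$: then $(a_i)$ is the constant sequence at $a_1$, and since $a_1+a_1=2a_1$ is not a power of two one may simply prepend $a_1$ again and repeat forever. So from here on I assume $a_1\neq a_2$, which is exactly the hypothesis of Theorem~\ref{main}.

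The basic step is this. Apply Theorem~\ref{main} with $p_1=a_1$ and $p_2=a_2$ to obtain an odd prime, which I will call $a_0$, such that $a_2$ is the smallest odd prime dividing $a_0+a_1$. Since $a_0+a_1$ then has an odd prime factor it is not a power of two, so $a_0,a_1,a_2,\dots$ is again a prime Fibonacci sequence, now one term longer on the left. The observation that makes the whole construction go through is that $a_0\neq a_1$ automatically: if $a_0=a_1$ then $a_2\mid a_0+a_1=2a_1$, and since $a_2$ is an odd prime this forces $a_2\mid a_1$, hence $a_2=a_1$, contradicting $a_1\neq a_2$. Thus $(a_0,a_1)$ is once more a pair of distinct odd primes, and Theorem~\ref{main} is available again.

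The rest is a routine induction, and I would phrase it as follows: suppose odd primes $a_{-k},a_{-k+1},\dots,a_1,a_2$ have been constructed with no two consecutive terms equal and with $a_{i+2}$ equal to the smallest odd prime divisor of $a_i+a_{i+1}$ for every $i\geq -k$; then apply Theorem~\ref{main} to the distinct odd primes $a_{-k},a_{-k+1}$ to produce an odd prime $a_{-k-1}$, distinct from $a_{-k}$ by the argument just given, for which $a_{-k+1}$ is the smallest odd prime divisor of $a_{-k-1}+a_{-k}$. This supplies precisely the data needed at the next stage, so the recursion never halts and $a_i$ is defined for all integers $i\leq 2$; read in the reverse order (``turned around''), this gives the desired infinite extension of $(a_i)$. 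There is no real obstacle beyond Theorem~\ref{main} itself; the one point that must not be overlooked is that consecutive terms stay distinct so the theorem can be reapplied at each stage, and we have seen that this is guaranteed by the construction.
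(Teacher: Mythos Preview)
Your proof is correct and follows exactly the route the paper intends: the paper states Corollary~\ref{cor:left} without proof, treating it as an immediate consequence of Theorem~\ref{main}, and your argument is the natural unwinding of that inference. The one detail you make explicit that the paper leaves to the reader is the verification that the newly produced term $a_{-k-1}$ is distinct from $a_{-k}$, so that the hypothesis of Theorem~\ref{main} continues to hold at the next step; this is a genuine (if easy) point, and your handling of it, together with the degenerate constant-sequence case, makes the argument complete.
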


We can give a  second proof of Corollary \ref{cor:kterm} using the celebrated Green-Tao theorem \cite{GT} that for all $n$ one can find $n$ primes in arithmetic progression.

\begin{proof}[Alternate proof of Corollary \ref{cor:kterm}]
We construct a prime Fibonacci sequence of length $k$.  By \cite{GT}, there is an arithmetic progression $p_0,p_1,\dots, p_n$ of primes of length $2^{k-2}+1$ (so $n=2^{k-2}$).  We let $a_1=p_0$ and $a_2=p_n$. Notice that, because the $p_i$'s  are in arithmetic progression, $p_0+p_n = 2p_{n/2}$, so $a_3=p_{n/2}$. Similarly, $a_2+a_3=p_n+p_{n/2} = 2p_{3n/4}$, so $a_4=p_{3n/4}$. In each case, once $a_i = p_\alpha$ and $a_{i+1} = p_\beta$ have been found, $a_{i+2} = p_{(\alpha+\beta)/2}$.  The claim that this works is equivalent to the claim that the sequence
\[
    b_1=0, b_2 =2^{k-2}, \textrm{ and } b_{i+2} = \frac{b_i + b_{i+1}}{2}
\]
is an integer for $i\leq k$, which is easily verified by induction.

\end{proof}

\section{Reversed Prime Fibonacci Sequences}

In light of Corollary \ref{cor:left}, we make the following definition:

\begin{definition}
Let $p,q$ be distinct odd primes.  Then $(a_i)$ is the reversed prime Fibonacci sequence generated by $p,q$ if $a_1=p,a_2=q$, and for all $i\geq 1$, $a_{i+2}$ is the smallest odd prime with the following property: $a_i$ is the smallest odd prime divisor of $a_{i+1}+a_{i+2}$.
\end{definition}
Then we have the following result:

\begin{theorem}
Let $(a_i)$ be an eventually monotonic reversed prime Fibonacci sequence.  Then $(a_i)$ has asymptotic density zero in the primes.
\end{theorem}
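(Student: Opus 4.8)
The plan is to show that an eventually monotonic reversed prime Fibonacci sequence is forced to grow at least exponentially; density zero is then automatic, because $\pi(x)\sim x/\log x$ and so any set of primes whose counting function is $O(\log x)$ has asymptotic density zero in the primes.

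First I would reduce to the case where the sequence is eventually \emph{strictly} increasing. The defining property gives $a_i\mid a_{i+1}+a_{i+2}$ for all $i$, since $a_i$ is the smallest odd prime divisor of $a_{i+1}+a_{i+2}$. Hence, if $a_j=a_{j+1}=c$ then $c\mid c+a_{j+2}$ forces $a_{j+2}=c$; and if $a_{j+1}=a_{j+2}=c$ then, because $c$ is the only odd prime divisor of $2c$, we get $a_j=c$. So an equality of two consecutive terms propagates in both directions and forces the sequence to be constant, hence $p=q$, contrary to hypothesis. A non-increasing sequence of positive integers is eventually constant, so no eventually non-increasing reversed prime Fibonacci sequence exists; and an eventually non-decreasing one has no eventual repetition $a_j=a_{j+1}$ (again, such a repetition would make the sequence constant), so it is eventually strictly increasing. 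Thus I may fix $N$ with $a_N<a_{N+1}<a_{N+2}<\cdots$.

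Next I would establish a growth estimate. Write $a_{i+1}+a_{i+2}=m_i a_i$; since $a_{i+1}+a_{i+2}$ is even and $a_i$ is odd, $m_i$ is an even positive integer. If $m_i=2$ then $a_{i+2}=2a_i-a_{i+1}<a_i<a_{i+1}$, contradicting monotonicity for $i\ge N$; hence $m_i\ge 4$, so $a_{i+1}+a_{i+2}\ge 4a_i$ whenever $i\ge N$. Combined with $a_{i+2}>a_{i+1}$ this gives $2a_{i+2}>a_{i+1}+a_{i+2}\ge 4a_i$, i.e.\ $a_{i+2}>2a_i$. Iterating separately along the even- and odd-indexed subsequences yields $a_i\ge c\,2^{i/2}$ for all large $i$, with $c>0$ a constant. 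Therefore $\#\{i:a_i\le x\}=O(\log x)$, and
\[
\frac{\#\{i:a_i\le x\}}{\pi(x)}=O\!\Bigl(\frac{(\log x)^2}{x}\Bigr)\longrightarrow 0\qquad(x\to\infty),
\]
so the terms of $(a_i)$ form a set of asymptotic density zero in the primes.

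I expect the real work to be in the reduction of the second paragraph rather than in any analytic estimate. One must honestly rule out the eventually-constant sequence, and be careful about small primes and about the possibility that $a_{i+1}+a_{i+2}$ is a power of $2$ (in which case ``smallest odd prime divisor'' is vacuous and that candidate for $a_{i+2}$ is not permitted), so that ``eventually monotonic'' really does collapse to ``eventually strictly increasing.'' Granting that, the exponential lower bound on $a_i$ and the comparison with $\pi(x)$ are routine.
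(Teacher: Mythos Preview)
Your argument is correct and rests on the same key inequality as the paper's: from $a_i\mid a_{i+1}+a_{i+2}$, strict monotonicity, and the parity observation that $(a_{i+1}+a_{i+2})/a_i$ must be even, one gets $a_{i+1}+a_{i+2}\ge 4a_i$. The packaging differs. You first supply a careful reduction to the strictly increasing case (which the paper assumes tacitly), and then from $4a_i\le a_{i+1}+a_{i+2}<2a_{i+2}$ extract the clean doubling bound $a_{i+2}>2a_i$, yielding $a_i\gg(\sqrt2)^{\,i}$ and a counting function $O(\log x)$, which you compare to $\pi(x)$ via the prime number theorem. The paper instead compares $(a_i)$ with the linear recurrence $b_{i+2}=4b_i-b_{i+1}$, whose positive characteristic root is $\alpha=\tfrac12(\sqrt{17}-1)\approx1.56$, obtaining $a_n\gg\alpha^n$, and finishes by invoking the crude bound $p_n\ll(1.2)^n$ from \cite{Jaroma} to conclude $p_n/a_n\to0$. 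Your route is more elementary and self-contained; the paper's gives a marginally sharper growth exponent but relies on an external upper bound for the $n$th prime.
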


\begin{proof}
Let $(a_i)$ be any (eventually) monotonic reversed prime Fibonacci sequence, and let $(p_i)$ be the sequence of primes.  Since $a_i\mid (a_{i+1}+a_{i+2})$ and $(a_i)$ is monotonic, we have $2a_i<a_{i+1}+a_{i+2}$.  Since $a_i$ is a divisor of $a_{i+1}+a_{i+2}$, we get the stronger inequality $4a_i\leq a_{i+1}+a_{i+2}$.  Then we have $a_{i+2}\geq 4a_j-a_{i+1}$.  Consider a sequence $(b_i)$ satisfying the recurrence $b_{i+2}=4b_i-b_{i+1}$.  This sequence has characteristic equation $r^2+r-4=0$, with positive root $\alpha=\frac{1}{2}(\sqrt{17}-1)\approx 1.56$.  Certainly, we have $a_n\gg b_n$, no matter the initial conditions on either sequence, so $a_n\gg\alpha^n$.  Note also that from \cite{Jaroma} we have $p_n\ll (1.2)^n$.  Combining these, we get 
\[
\frac{p_n}{a_n}\ll\frac{1.2^n}{\alpha^n}<\frac{1.2^n}{1.5^n}\rightarrow 0.
\]
\end{proof}
We do not know whether any reversed prime Fibonacci sequence is eventually monotonic. Getting good computational data is difficult, because although reversed prime Fibonacci sequences may not be monotonic, they still grow quite quickly.  For instance, consider the sequence below beginning with $3,5$, for which we can compute just 15 terms:

\[
3, 5, 7, 3, 11, 7, 37, 19, 277, 331, 223, 439, 7, 406507, 67, \dots
\]
(This is sequence number A255562 in OEIS \cite{sloane}.)  We do not know the next term, but we do know by exhaustive search that it must be greater than two billion.  %Since 812947 is composite, 67, 406507, and $a_{16}$ are not in arithmetic progression, and so the next term is greater than 165247941049 ($406507^2$).  

% \bibliographystyle{plain}
% \bibliography{refs}

%\bibitem{sloane}
%OEIS Foundation Inc. (2011), The On-Line Encyclopedia of Integer Sequences, \tt{http://oeis.org}. \rm 

\end{document}